\theoremstyle{plain}
\numberwithin{equation}{section} 
\numberwithin{figure}{section}
\newtheorem{theorem}{Theorem}[section]
\newtheorem{corollary}[theorem]{Corollary}
\newtheorem{lemma}[theorem]{Lemma}
\newtheorem{proposition}[theorem]{Proposition}
\newtheorem{question}[theorem]{Question}
\newcommand{\calF}{\mathcal{F}}
\newcommand{\QQ}{\mathbb{Q}}
\newcommand{\ZZ}{\mathbb{Z}}
\newcommand{\RR}{\mathbb{R}}
\newcommand{\im}{\operatorname{im}}
\begin{document}

\author{Bal\'azs Strenner}
\address{School of Mathematics, Georgia Institute of Technology,
  Atlanta, GA 30332}
\email{strenner@math.gatech.edu}

\date{\today} \title[Pseudo-Anosov maps with vanishing SAF]{Lifts of
  pseudo-Anosov homeomorphisms of nonorientable surfaces have
  vanishing SAF invariant}

\begin{abstract}
  We show that any pseudo-Anosov map that is a lift of a pseudo-Anosov
  homeomorphism of a nonorientable surface has vanishing SAF
  invariant. We also provide a criterion to certify that a
  pseudo-Anosov map is not such a lift.
\end{abstract}

\maketitle

\section{Introduction}

An \emph{interval exchange transformation} is a bijection
$f: [0,a) \to [0,a)$ with the property that there exist
$0 = a_0 < a_1 < \cdots < a_{n} = a$ and $t_1,\ldots,t_{n} \in \RR$
such that $f|_{[a_{i-1},a_{i})}(x) = x+t_i$ for $1\le i \le n$. The
\emph{Sah-Arnoux-Fathi (SAF) invariant} of $f$ takes values in
$\RR \wedge_\QQ \RR$, and it is defined as
\begin{displaymath}
  SAF(f) = \sum_{i=1}^n (a_{i}-a_{i-1}) \wedge_\QQ t_i.
\end{displaymath}

Given a transversely orientable singular measured foliation $\calF$ on
an orientable surface, any transverse arc gives rise to an interval
exchange transformation $f$ by the first return map of the flow along
the leaves of $\calF$. It turns out that $SAF(f)$ is independent of
the choice of the arc, hence one obtains the notion of the SAF
invariant for foliations by setting $SAF(\calF) = SAF(f)$. Note that
if $\calF$ is not transversely orientable or the surface is
nonorientable, then the first return map is not an interval exchange
transformation in the above sense, hence (at least in this way)
$SAF(\calF)$ cannot be not defined.

A homeomorphism $\psi$ of a surface is \emph{pseudo-Anosov} if there
is a number $\lambda>1$, and a pair of transverse invariant singular
measured foliations $\calF^u$ and $\calF^s$ such that
$\psi(\calF^u) = \lambda\calF^u$ and $\psi(\calF^s) =
  \lambda^{-1}\calF^s.$
The number $\lambda$ is called the stretch factor (or dilatation) of
$\psi$. \cite{FLP}

If $\calF^u$ and $\calF^s$ are transversely orientable (which implies
the orientability of the surface), then $SAF(\calF^u)$ and
$SAF(\calF^s)$ are defined up to scale. It turns out that
$SAF(\calF^u) = 0$ if and only if $SAF(\calF^s) = 0$ \cite[Lemma
2]{CaltaSchmidt13}, and in this case we say that $\psi$ has vanishing
SAF invariant.

To make the SAF invariant well-defined, in the rest of the paper we
assume without mentioning that $\calF^u$ and $\calF^s$ are
transversely orientable when $\psi$ is supported on an orientable
surface. On a nonorientable surface, they cannot both be transversely
oriented, so we only assert that one of them is. This ensures that the
surface has a double cover where both foliations are transversely
orientable.

It is well-understood when the SAF invariant of foliations vanish in
genus 2, and this was used for classifying Teichm\"uller curves in
genus 2 by Calta \cite{Calta04} and McMullen \cite{McMullen03}. In
higher genera, the picture is more complicated. Pseudo-Anosov maps
with vanishing SAF invariant (which do not exist in genus 2) have been
constructed
\cite{ArnouxYoccoz81,ArnouxSchmidt09,CaltaSchmidt13,DoSchmidt16} in
genus 3 and up.

We say that a pseudo-Anosov homeomorphism $\tilde\psi$ of an orientable
surface $S$ is a \emph{nonorientable lift} if
\begin{itemize}
\item $S$ is the oriented double cover of a nonorientable surface $N$,
\item and there is a pseudo-Anosov homeomorphism $\psi$ of $N$ such
  that $\tilde\psi$ is a lift of $\psi$.
\end{itemize}

The orientable double cover is unique: it corresponds to the index two
subgroup of orientation-preserving loops in $\pi_1(N)$. The lift of a
pseudo-Anosov homeomorphism is a pseudo-Anosov homeomorphism with same
stretch factor, and the invariant foliations upstairs are the lifts of
the foliations downstairs.

\begin{theorem}\label{theorem:SAF}
  If a pseudo-Anosov map is a nonorientable lift, then it has
  vanishing SAF invariant.
\end{theorem}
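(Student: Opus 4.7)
The plan is to exploit the orientation-reversing free deck involution $\sigma\colon S \to S$ of the orientation double cover $p\colon S \to N$, whose interaction with the transverse measures of the invariant foliations will force the SAF invariant to vanish for cohomological reasons. Say $\calF^u$ is the foliation on $N$ that is transversely orientable, with transverse measure represented by a closed 1-form $\bar\beta$ on $N$. Its pullback $\beta = p^*\bar\beta$ to $S$ is then $\sigma$-invariant, so $\sigma^*\beta = \beta$. By contrast, the transverse measure 1-form $\alpha$ of $\tilde\calF^s$ on $S$ does not descend to $N$ (since $\calF^s$ is not transversely orientable there), and a local computation yields $\sigma^*\alpha = -\alpha$.

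Next I would decompose $H_1(S;\QQ) = H_1^+ \oplus H_1^-$ into the $\pm 1$-eigenspaces of $\sigma_*$. Because $\sigma$ is free and orientation-reversing, $\sigma_*$ negates the intersection form, which forces $H_1^+$ and $H_1^-$ to be Lagrangian subspaces paired nondegenerately with each other. Since $\dim H_1^+ = \dim H_1(N;\QQ) = g$ where $g$ is the genus of $S$, both eigenspaces are $g$-dimensional, and one can choose a symplectic basis $A_1,\ldots,A_g,B_1,\ldots,B_g$ of $H_1(S;\QQ)$ with $A_i \in H_1^+$ and $B_i \in H_1^-$.

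The period homomorphism $\psi\colon H_1(S;\ZZ) \to \RR$ defined by $\psi(\gamma) = \int_\gamma \beta$ is $\sigma_*$-invariant (because $\sigma^*\beta = \beta$), and hence vanishes on $H_1^-$; in particular $\psi(B_i) = 0$ for every $i$. Applying the period-theoretic formula for the SAF invariant in terms of a symplectic basis,
\[
  SAF(\tilde\calF^u) = \sum_{i=1}^{g} \psi(A_i) \wedge_\QQ \psi(B_i),
\]
every term vanishes, giving $SAF(\tilde\calF^u) = 0$.

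The main obstacle I anticipate is justifying the period formula for the SAF invariant used in the final step. I expect it to follow from a Kenyon--Smillie style analysis of the translation surface $(S, \alpha + i\beta)$: the lengths and translations of the first-return IET on any transversal to $\tilde\calF^u$ lie in the $\QQ$-span of the periods of $\beta$, and the resulting antisymmetric bilinear expression matches $(\psi \wedge \psi)$ evaluated against the symplectic bivector $\sum_i A_i \wedge B_i \in \wedge^2 H_1(S;\QQ)$.
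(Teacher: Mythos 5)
Your argument is correct in substance, but it takes a genuinely different route from the paper. The paper never computes the SAF invariant directly: it invokes the Calta--Schmidt criterion that the SAF invariant vanishes if and only if $\QQ(\lambda)=\QQ(\lambda+\tfrac1\lambda)$, reduces this to showing that $\lambda$ and $1/\lambda$ are not Galois conjugates, and proves that by splitting $H^1(S,\RR)$ into the $\pm1$-eigenspaces $W^\pm$ of the deck transformation and observing that $\omega^u$ and $\omega^s$ lie in different summands, so $\pm\lambda$ and $\pm1/\lambda$ are roots of different integral factors of the characteristic polynomial. You use the dual decomposition on homology together with Arnoux's period formula $SAF=\sum_i\beta(A_i)\wedge_\QQ\beta(B_i)$ and kill every term because the periods of the $\sigma$-invariant form $\beta=p^*\bar\beta$ vanish on the Lagrangian $(-1)$-eigenspace $H_1^-$. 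Both proofs rest on the same geometric input (the orientation-reversing involution makes the two eigenspaces complementary Lagrangians and separates the two foliation classes); yours is more direct once the period formula is granted, while the paper's route yields the stronger algebraic byproduct that $\lambda$ and $1/\lambda$ are never Galois conjugate for pseudo-Anosov maps of nonorientable surfaces, which feeds into \Cref{theorem:criterion}. The one step you should not leave as a sketch is the period formula itself: it is a theorem of Arnoux and should be cited rather than re-derived, and your observation that $\sum_i A_i\wedge B_i$ is the $Sp(2g,\QQ)$-invariant bivector is exactly what licenses using a rational rather than integral symplectic basis. Note finally that the paper's Section 3 sketches a hands-on cancellation in the same spirit as yours, with the deck involution pairing up the terms of the defining IET sum.
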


This yields a new large collection of pseudo-Anosov maps with
vanishing SAF invariant. It also explains why the Arnoux--Yoccoz
\cite{ArnouxYoccoz81} and Arnoux--Rauzy (\cite[Section
4.1]{DoSchmidt16}, \cite{ArnouxRauzy91}) examples have vanishing SAF invariant. We were
unable to find a reference for the fact that these examples are
nonorientable lifts, so we elaborate on this in \Cref{sec:AY-and-AR}.

The construction of the other known examples with vanishing SAF does
not involve nonorientable surfaces, so the following question arises.

\begin{question}
  Which of the known examples of pseudo-Anosov maps with vanishing SAF
  invariant are nonorientable lifts?
\end{question}

\begin{theorem}\label{theorem:criterion}
  Let $\tilde\psi$ be a pseudo-Anosov homeomorphism of the closed orientable
  surface of genus $g$. Suppose $\tilde\psi$ is a nonorientable lift. 

  Then the stretch factor $\lambda$ is a root of a monic polynomial
  $p(x) \in \ZZ[x]$ of degree $g$ whose constant coefficient is
  $\pm 1$. Moreover, $p(x)$ is reciprocal mod 2.
\end{theorem}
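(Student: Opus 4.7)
The plan is to realize $p(x)$ as the characteristic polynomial of $\tilde\psi_*$ on the $+1$-eigenspace of the deck involution, and then to prove reciprocity modulo $2$ using the Gysin sequence of the oriented double cover. Let $\sigma\colon S \to S$ denote the deck involution and write $H_1(S;\QQ) = V_+ \oplus V_-$ for the $\pm 1$-eigenspace decomposition of $\sigma_*$. Each summand has dimension $g$: this follows from the transfer map $\tau\colon H_1(N;\QQ) \to H_1(S;\QQ)$ being a $\psi_*$-equivariant isomorphism onto $V_+$, together with $H_1(N;\QQ) \cong \QQ^g$. Since $\sigma$ is orientation-reversing, the intersection form $\omega$ on $H_1(S;\ZZ)$ satisfies $\sigma^*\omega = -\omega$, so $\omega$ vanishes on each $V_\pm$ and pairs $V_+$ with $V_-$ as dual symplectic spaces; combined with $\tilde\psi_*$-invariance of $\omega$, this forces $\tilde\psi_*|_{V_-}$ to be the inverse transpose of $\tilde\psi_*|_{V_+}$. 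Letting $p(x), q(x) \in \ZZ[x]$ be the characteristic polynomials of $\tilde\psi_*$ on $V_+, V_-$ (integrality because $\tilde\psi_*$ preserves the full-rank lattices $L_\pm := H_1(S;\ZZ) \cap V_\pm$; constant term $\pm 1$ because $\tilde\psi_*$ restricts to an automorphism of each), the duality gives $q(x) = \pm x^g p(1/x)$. The transverse measure of the transversely orientable invariant foliation on $N$ defines an eigenvector of $\psi^*$ on $H^1(N;\RR)$ with eigenvalue $\lambda^{\pm 1}$; this exhibits $\lambda$ as a root of $p$ or of $q$, each a monic polynomial of the required form.

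The remaining reciprocity claim is equivalent (via $q(x) = \pm x^g p(1/x)$) to the identity $\bar p = \bar q$ in $\mathbb{F}_2[x]$. Here I would use the mod-$2$ Gysin sequence of the oriented double cover, classified by $w := w_1(N) \in H^1(N;\mathbb{F}_2)$, which yields the $\psi^*$-equivariant short exact sequence
\[
0 \to H^1(N;\mathbb{F}_2)/\langle w\rangle \to H^1(S;\mathbb{F}_2) \to \ker\bigl(\cup w \colon H^1(N;\mathbb{F}_2) \to H^2(N;\mathbb{F}_2)\bigr) \to 0.
\]
Let $r(x) \in \mathbb{F}_2[x]$ be the characteristic polynomial of $\psi^*$ on $H^1(N;\mathbb{F}_2)$. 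Because $w$ is canonical (so $\psi^* w = w$) and $\psi^*$ acts as the identity on $H^2(N;\mathbb{F}_2) = \mathbb{F}_2$, the short exact sequences $0 \to \langle w\rangle \to H^1(N;\mathbb{F}_2) \to H^1(N;\mathbb{F}_2)/\langle w\rangle \to 0$ and $0 \to \ker(\cup w) \to H^1(N;\mathbb{F}_2) \to H^2(N;\mathbb{F}_2) \to 0$ each split a factor $x+1$ off of $r$, so the two outer terms of the Gysin sequence have the same characteristic polynomial $r(x)/(x+1)$. Independently, the $\psi_*$-equivariant exact sequence $0 \to \ZZ/2 \to H_1(N;\ZZ) \to \ZZ^g \to 0$ (with $\psi_*$ trivial on the torsion subgroup since $\mathrm{Aut}(\ZZ/2)$ is trivial) tensored with $\mathbb{F}_2$ shows that $r(x) = (x+1)\bar p(x)$, where $\bar p$ is the mod-$2$ reduction of the characteristic polynomial of $\psi_*$ on $H_1(N;\ZZ)/\mathrm{tor} \cong \ZZ^g$; by the transfer isomorphism this coincides with the $\bar p$ above. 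Consequently the Gysin sequence forces the characteristic polynomial of $\tilde\psi^*$ on $H^1(S;\mathbb{F}_2)$ to equal $\bar p(x)^2$, but that characteristic polynomial is also $\bar p(x)\bar q(x)$ from the $V_+\oplus V_-$ splitting reduced mod $2$. Cancelling $\bar p(x)$ (nonzero since $\bar p(0) = 1$) in the domain $\mathbb{F}_2[x]$ yields $\bar p = \bar q$.

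The main obstacle is the reciprocity modulo $2$ step: identifying the characteristic polynomial on both outer terms of the Gysin sequence with $\bar p$ requires simultaneously exploiting the topological naturality of $w = w_1(N)$, the triviality of $\psi_*$ on the mod-$2$ torsion of $H_1(N;\ZZ)$, and the transfer isomorphism in a coordinated way. This is precisely where the nonorientability of $N$ enters the argument, via the nontrivial class $w$ and the $\ZZ/2$-torsion in $H_1(N;\ZZ)$ that are absent in the orientable setting.
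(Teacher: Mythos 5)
Your proof is correct, but it reaches the mod--2 reciprocity by a genuinely different route than the paper. The paper takes $p(x)$ to be the characteristic polynomial of $\psi^*$ on $H^1(N,\ZZ)\cong\ZZ^g$ (your $p$ agrees with this via the transfer isomorphism onto $V_+$) and proves reciprocity entirely downstairs on $N$: in \Cref{prop:reciprocal-mod-2} the cup-product pairing on $H^1(N,\ZZ_2)\cong\ZZ_2^{g+1}$ is non-degenerate and $\psi^*$-invariant, so by \Cref{lemma:reciprocal} the characteristic polynomial there is reciprocal mod 2; since the image of $H^1(N,\ZZ)$ has codimension one, that polynomial is $(x+1)p(x)$, and the factor $x+1$ is stripped off. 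You instead work upstairs on $S$ and compute the characteristic polynomial of $\tilde\psi^*$ on $H^1(S,\ZZ_2)$ in two ways---as $\bar p^2$ via the Gysin/transfer sequence of the double cover, and as $\bar p\,\bar q$ with $q(x)=\pm x^gp(1/x)$ coming from the symplectic pairing between the $\pm1$-eigenspaces of the deck involution---and cancel. Both arguments ultimately rest on mod--2 Poincar\'e duality: your identification of the two outer Gysin terms with $r(x)/(x+1)$ implicitly needs $\cup\,w$ to be onto $H^2(N,\ZZ_2)$, which is the same non-degeneracy input the paper cites. The paper's version is shorter and avoids the Gysin sequence and the eigenspace decomposition altogether; yours is longer but makes the ``reciprocal partner'' of $p$ geometrically visible as the action on $V_-$, which is close in spirit to the proof of \Cref{prop:lambda-and-reciprocal-not-conjugates}. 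One small inaccuracy: the foliation class is an eigenvector with eigenvalue $\pm\lambda^{\pm1}$, not just $\lambda^{\pm1}$, since $\psi$ may reverse the transverse orientation; thus $\lambda$ may only be a root of $(-1)^gp(-x)$ or $\pm x^gp(-1/x)$. This is harmless---these are still monic of degree $g$ with constant term $\pm1$ and coincide with $p$, respectively its reciprocal, mod 2---but it should be stated, as the paper does.
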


\begin{corollary}\label{cor:not_lifts}
  Pseudo-Anosov maps with vanishing SAF invariant that are not
  nonorientable lifts include:
  \begin{itemize}
  \item the example for $q=14$ in \cite{ArnouxSchmidt09}    
  \item the example in Remark 6 and several examples in Sections 4.2
    and 4.3 in \cite{DoSchmidt16}.
  \end{itemize}
\end{corollary}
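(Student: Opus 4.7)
The plan is to apply \Cref{theorem:criterion} contrapositively: for each listed example, I would produce the minimal polynomial $m(x) \in \ZZ[x]$ of the stretch factor $\lambda$ together with the genus $g$ of the surface, and then show that no monic $p(x) \in \ZZ[x]$ of degree exactly $g$ with constant coefficient $\pm 1$ and reciprocal mod $2$ can vanish at $\lambda$.

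The structural observation that drives each case is the following. If such a $p(x)$ exists, then $m(x)$ must divide $p(x)$ in $\ZZ[x]$ (since $m$ is the minimal polynomial and $p$ is monic), so $p(x) = m(x) q(x)$ with $q(x) \in \ZZ[x]$ monic of degree $g - \deg m$. Hence three obstructions can arise, each of which is straightforward to check: (i) $\deg m > g$, in which case no such $p$ exists; (ii) the constant term $m(0) q(0)$ cannot equal $\pm 1$ because $|m(0)|$ is already greater than $1$ (recall that $q(0) \in \ZZ$); or (iii) even when $\deg m \le g$ and $|m(0)| = 1$, the finitely many choices of monic $q$ of degree $g - \deg m$ with $q(0) = \pm 1$ that are compatible with reciprocality of $p$ mod $2$ can be enumerated and ruled out. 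Note that reciprocality mod $2$ of $p = m q$ means $m(x) q(x) \equiv x^g m(1/x) q(1/x) \pmod{2}$, which for fixed $m$ cuts the residue of $q$ mod $2$ down to finitely many possibilities, and in many cases forces $\deg q$ to have a definite parity.

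First I would handle the example from \cite{ArnouxSchmidt09} with $q = 14$: extract $\lambda$ and its minimal polynomial from the paper, read off $g$, and then apply the obstruction above; I expect the simplest of (i)--(iii) to apply. I would then treat the example in Remark~6 of \cite{DoSchmidt16} and loop through the listed examples from Sections 4.2 and 4.3 in the same manner, recording in each case which of (i)--(iii) is the operative obstruction. Because the degrees, constant coefficients, and reductions mod $2$ of the relevant minimal polynomials are explicit in the cited sources, no genuinely new computation is needed beyond bookkeeping.

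The main obstacle is not conceptual but practical: it lies in correctly pulling the right data (minimal polynomial of the stretch factor and genus of the surface supporting the pseudo-Anosov) out of each cited construction and being careful that the relevant polynomial is actually the minimal polynomial rather than a larger characteristic polynomial of some transition matrix. In case (iii), I would need to be slightly more careful, since the reciprocal-mod-$2$ condition is a weaker constraint than genuine reciprocality, and a naive argument over $\ZZ$ might miss an admissible $p(x)$ that only becomes reciprocal after reduction; the safest route is to compute $m(x) \bmod 2$ first and then directly enumerate monic $q(x) \bmod 2$ of the forced degree that make the product palindromic, which is a finite check.
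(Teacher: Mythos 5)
Your approach is exactly the paper's: the corollary is presented as a direct application of \Cref{theorem:criterion}, with the author simply checking, for each example whose minimal polynomial appears in the cited source, that the stretch factor cannot be a root of a monic degree-$g$ integer polynomial with constant term $\pm 1$ that is reciprocal mod $2$. Your factorization $p(x)=m(x)q(x)$ and the three obstructions (degree, constant term, mod-$2$ palindromicity) are the correct way to organize that finite verification, which the paper leaves implicit.
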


There may be other examples in
\cite{ArnouxSchmidt09,CaltaSchmidt13,DoSchmidt16} where
\Cref{theorem:criterion} applies. We only checked the examples where
the minimal polynomial of $\lambda$ was mentioned in the papers.

\Cref{theorem:SAF} gives a geometric reason for the vanishing of the
SAF invariant: an orientation-reversing symmetry.

\begin{question}
  Is the vanishing of the SAF always a consequence of some
  symmetry?
\end{question}

\subsection{Some remarks}

Examples of pseudo-Anosov homeomorphisms of nonorientable surfaces are
scarce in the literature. However, the general theory is the same as
for orientable surfaces \cite{FLP, Thurston88}. Some examples are
found in 
\begin{itemize}
\item \cite{ArnouxYoccoz81}, where the surface is the thrice punctured
  projective plane;
\item \cite{Penner88}, where the surface is the
  connected sum of two Klein bottles;
\item \Cref{sec:AY-and-AR} of this paper, where the surface is any
  closed nonorientable surface of genus at least 4.
\end{itemize}
Penner's method \cite{Penner88} is in fact general enough to construct
pseudo-Anosov mapping classes on every nonorientable surface that
allows them \cite{StrennerDegrees}.

The way in which ``nonorientable lift'' is defined may seem
unnecessarily restrictive. Why not allow branched covers or higher
degree covers of nonorientable surfaces? Any such covering would
factor through the oriented double cover, so in fact no generality is
lost.

\section{Proof of the vanishing}

\begin{lemma}\cite[Lemma 2]{CaltaSchmidt13}\label{lemma:vanishing_SAF_crit}
  A pseudo-Anosov map with stretch factor $\lambda$ has vanishing SAF
  invariant if and only if
  $\QQ(\lambda) = \QQ(\lambda + \frac{1}\lambda)$.
\end{lemma}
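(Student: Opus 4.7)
The plan is to exploit the self-similarity of the return-map IET imposed by $\psi$, translating the vanishing of $SAF$ into a spectral condition on multiplication by $\lambda$, and then into the field-theoretic condition of the lemma. Let $L := \QQ(\lambda)$ and, for $\mu \in L$, write $M_\mu: L \to L$ for the $\QQ$-linear map of multiplication by $\mu$. After normalizing the transverse measure of $\calF^u$, the lengths $a_i$ and translations $t_i$ of the first-return IET $f$ on any transverse arc $I$ all lie in $L$, so $SAF(f) \in L \wedge_\QQ L$.

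\textbf{Self-similarity and spectral consequence.} Because $\psi$ preserves $\calF^u$ and scales transverse length by $\lambda^{-1}$, the first-return IET on $\psi(I)$ has lengths and translations equal to those on $I$ multiplied by $\lambda^{-1}$. Since $SAF$ is independent of the arc,
\[
SAF(f) \;=\; \sum_i \bigl(\lambda^{-1}(a_i-a_{i-1})\bigr) \wedge_\QQ \bigl(\lambda^{-1} t_i\bigr) \;=\; (M_{\lambda^{-1}} \wedge M_{\lambda^{-1}})\bigl(SAF(f)\bigr),
\]
so $SAF(f)$ is fixed by $M_{\lambda^{-1}} \wedge M_{\lambda^{-1}}$. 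The eigenvalues of this operator on $L \wedge_\QQ L$ are $\lambda_i^{-1}\lambda_j^{-1}$ for $i<j$, where $\lambda_1,\dots,\lambda_d$ are the Galois conjugates of $\lambda$; hence $1$ is an eigenvalue iff $\lambda_i\lambda_j=1$ for some $i \neq j$, iff $\lambda^{-1}$ is a Galois conjugate of $\lambda$, iff the minimal polynomial of $\lambda$ is reciprocal. A short Galois-theoretic check identifies this with the field condition: an automorphism of the Galois closure fixes $\lambda+\lambda^{-1}$ iff it sends $\lambda$ to $\lambda$ or to $\lambda^{-1}$, so the stabilizer of $\lambda+\lambda^{-1}$ strictly contains that of $\lambda$ (equivalently $\QQ(\lambda+\lambda^{-1})\subsetneq\QQ(\lambda)$) iff the minimal polynomial of $\lambda$ is reciprocal.

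\textbf{The two directions and the main obstacle.} If $\QQ(\lambda)=\QQ(\lambda+\lambda^{-1})$, then $1$ is not an eigenvalue of $M_{\lambda^{-1}} \wedge M_{\lambda^{-1}}$, its fixed space is trivial, and the self-similarity equation forces $SAF(f)=0$. The converse is the main difficulty: when the minimal polynomial is reciprocal one must show the specific class $SAF(f)$ is genuinely nonzero, rather than merely permitted to be so by the self-similarity. My plan for this direction is to use the Galois-equivariant decomposition $L \otimes_\QQ L \cong \prod_j L_j$ arising from factoring the minimal polynomial of $\lambda$ over $L$, project $SAF(f)$ onto the factor corresponding to the reciprocal pair $\{\lambda,\lambda^{-1}\}$, and compute this projection directly from the $a_i$ and $t_i$; nonvanishing should then follow from a positivity statement coming from the Perron--Frobenius character of the invariant transverse measure.
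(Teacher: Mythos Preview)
The paper does not prove this lemma at all: it is quoted verbatim from \cite{CaltaSchmidt13} and used as a black box. So there is no ``paper's own proof'' to compare against; what follows is an assessment of your argument on its merits.

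Your forward direction (if $\QQ(\lambda)=\QQ(\lambda+\lambda^{-1})$ then $SAF=0$) is the standard self-similarity argument and is correct: the renormalization identity forces $SAF(f)$ into the fixed space of $M_{\lambda^{-1}}\wedge M_{\lambda^{-1}}$ on $\Lambda^2_\QQ L$, whose eigenvalues are the $\lambda_i^{-1}\lambda_j^{-1}$, and this fixed space is zero precisely when no product $\lambda_i\lambda_j$ equals $1$, i.e.\ when $\lambda^{-1}$ is not a Galois conjugate of $\lambda$. Your translation of that condition into $\QQ(\lambda)=\QQ(\lambda+\lambda^{-1})$ is also fine (and is exactly the content of the paper's next lemma).

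The genuine gap is the converse. You explicitly label it ``the main difficulty'' and then offer only a plan: project to the factor of $L\otimes_\QQ L$ corresponding to the reciprocal pair and invoke an unspecified ``positivity statement coming from the Perron--Frobenius character.'' That is not a proof. The self-similarity equation only says $SAF(f)$ \emph{may} be nonzero when the minimal polynomial is reciprocal; showing it \emph{is} nonzero requires real input about the specific vectors $(a_i-a_{i-1})$ and $t_i$, not just the ambient linear algebra. The published arguments (Kenyon--Smillie's $J$-invariant, and the proof in \cite{CaltaSchmidt13}) do essentially what you sketch, but the positivity/nondegeneracy step is where the actual content lies, and you have not supplied it. As written, your proposal establishes only one of the two implications.
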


Note that $\QQ(\lambda) : \QQ(\lambda + \frac{1}\lambda)$ is either 1
or 2.

\begin{lemma}\label{lemma:Galois-conjugates}
  $\QQ(\lambda) : \QQ(\lambda + \frac{1}\lambda) = 2$ if and only if
  $\lambda$ and $1/\lambda$ are Galois conjugates.
\end{lemma}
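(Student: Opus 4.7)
The plan is to reduce to Galois theory in a Galois closure. Set $\mu = \lambda + \frac{1}{\lambda}$ and observe the key fact driving everything: $\lambda$ and $1/\lambda$ are the two roots of the quadratic $q(x) = x^2 - \mu x + 1 \in \QQ(\mu)[x]$. Since a pseudo-Anosov stretch factor satisfies $\lambda > 1$, we have $\lambda \neq 1/\lambda$, so $q$ has distinct roots. The extension $\QQ(\lambda) : \QQ(\mu)$ therefore has degree $2$ precisely when $q$ is the minimal polynomial of $\lambda$ over $\QQ(\mu)$, i.e.\ when $\lambda \notin \QQ(\mu)$.

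Let $K$ be a Galois closure of $\QQ(\lambda)$ over $\QQ$, and set $G = \operatorname{Gal}(K/\QQ)$, $H_\lambda = \operatorname{Gal}(K/\QQ(\lambda))$, $H_\mu = \operatorname{Gal}(K/\QQ(\mu))$. Since $\mu \in \QQ(\lambda)$, we have $H_\lambda \subseteq H_\mu$, and the fundamental theorem gives $[\QQ(\lambda):\QQ(\mu)] = [H_\mu : H_\lambda]$.

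For the forward direction, assume the degree is $2$ and pick $\sigma \in H_\mu \setminus H_\lambda$. Applying $\sigma$ to $q(\lambda)=0$ and using $\sigma(\mu)=\mu$ shows that $\sigma(\lambda)$ is a root of $q$, so $\sigma(\lambda) \in \{\lambda, 1/\lambda\}$. Since $\sigma \notin H_\lambda$, we must have $\sigma(\lambda) = 1/\lambda$, exhibiting $1/\lambda$ as a Galois conjugate of $\lambda$. For the reverse direction, suppose some $\sigma \in G$ satisfies $\sigma(\lambda) = 1/\lambda$. Then $\sigma(\mu) = 1/\lambda + \lambda = \mu$, so $\sigma \in H_\mu$; but $\sigma \notin H_\lambda$ since $\lambda \neq 1/\lambda$. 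Hence $H_\lambda \subsetneq H_\mu$, forcing $[\QQ(\lambda):\QQ(\mu)] \geq 2$, which combined with the universal upper bound of $2$ gives equality.

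There is no serious obstacle in this argument; the only point that requires care is invoking $\lambda > 1$ to guarantee $\lambda \neq 1/\lambda$, which is what keeps $q$ separable and lets the Galois-theoretic fiber $\{\lambda,1/\lambda\}$ over $\mu$ detect the degree.
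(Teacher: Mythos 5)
Your proof is correct and follows essentially the same route as the paper: an automorphism sending $\lambda$ to $1/\lambda$ necessarily fixes $\QQ(\lambda+\frac{1}{\lambda})$, and the quadratic $x^2-\mu x+1$ ties the existence of such an automorphism to the degree being $2$. You merely spell out the Galois-correspondence bookkeeping in a Galois closure, where the paper works directly with automorphisms of $\QQ(\lambda)$.
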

\begin{proof}
  $\lambda$ and $1/\lambda$ are Galois conjugates if and only if there
  is an automorphism $\sigma$ of $\QQ(\lambda)$ such that
  $\sigma(\lambda) = 1/\lambda$. Note that $\sigma$ acts trivially on
  $\QQ(\lambda + \frac{1}\lambda)$. Such a $\sigma$ exists when
  $\QQ(\lambda) : \QQ(\lambda + \frac{1}\lambda) = 2$, but not if 
  $\QQ(\lambda) = \QQ(\lambda + \frac{1}\lambda)$.
\end{proof}

We remark that a similar lemma with a similar proof appears in
\cite{DoSchmidt16} as Proposition 1, and it is used to prove following
version of \Cref{lemma:vanishing_SAF_crit}: A pseudo-Anosov map with
stretch factor $\lambda$ has vanishing SAF invariant if and only if
the minimal polynomial of $\lambda$ is reciprocal \cite[Theorem
1]{DoSchmidt16}.

\begin{proposition}\label{prop:lambda-and-reciprocal-not-conjugates}
  Let $\lambda$ be the stretch factor of a pseudo-Anosov homeomorphism of a
  nonorientable surface. Then $\lambda$ and $1/\lambda$ are not Galois
  conjugates.
\end{proposition}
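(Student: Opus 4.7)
The plan is to argue by contradiction: I assume $\lambda$ and $1/\lambda$ share a minimal polynomial $m(x)\in\QQ[x]$ and produce two linearly independent $\lambda$-eigenvectors of $\tilde\psi^*$ on $H^1(\tilde N;\RR)$, contradicting the classical fact that $\lambda$ is a simple eigenvalue there.

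Let $p\colon\tilde N\to N$ be the orientation double cover with deck involution $\tau$, and let $\tilde\psi$ be a lift of $\psi$. Both $\tau\tilde\psi$ and $\tilde\psi\tau$ are lifts of $\psi$; since the deck group is $\ZZ/2$ they coincide, so $\tilde\psi$ commutes with $\tau$. Consequently $\tilde\psi^*$ preserves the decomposition $H^1(\tilde N;\QQ)=V^+\oplus V^-$ into $\pm 1$-eigenspaces of $\tau^*$. The lifted foliations $\tilde\calF^u,\tilde\calF^s$ are transversely orientable on $\tilde N$ and determine nonzero cohomology classes $[\tilde\omega^u],[\tilde\omega^s]$ on which $\tilde\psi^*$ acts by $\lambda$ and $\lambda^{-1}$ (in some order); nonvanishing follows from the minimality of pseudo-Anosov foliations, since a vanishing class would realize the foliation as the level sets of a global function. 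Because $\calF^u$ is already transversely orientable on $N$, its transverse orientation lifts $\tau$-equivariantly and $[\tilde\omega^u]\in V^+$; because $\calF^s$ is not transversely orientable on $N$, $\tau$ flips the transverse orientation on $\tilde\calF^s$, so $[\tilde\omega^s]\in V^-$.

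Let $p^+(x),p^-(x)\in\ZZ[x]$ be the characteristic polynomials of $\tilde\psi^*$ on $V^+$ and $V^-$. Since $V^+$ contains an eigenvector with eigenvalue in $\{\lambda,1/\lambda\}$, we have $m(x)\mid p^+(x)$; the Galois-conjugacy hypothesis then forces the other element of $\{\lambda,1/\lambda\}$ to be a root of $p^+(x)$ as well, producing a second $\tilde\psi^*$-eigenvector inside $V^+$. Applying the same argument to $V^-$ yields $m(x)\mid p^-(x)$, so both $\lambda$ and $1/\lambda$ occur as eigenvalues of $\tilde\psi^*$ on each of $V^+$ and $V^-$; in particular the $\lambda$-eigenspace inside $V^+\oplus V^-=H^1(\tilde N;\RR)$ has dimension at least $2$.

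The remaining step, which I expect to be the main obstacle, is to justify the contradictory simplicity: the $\lambda$-eigenspace of $\tilde\psi^*$ on $H^1(\tilde N;\RR)$ is one-dimensional whenever $\tilde\psi$ is pseudo-Anosov with transversely orientable invariant foliations. The standard argument realizes the action of $\tilde\psi^*$ on cohomology as a quotient of the action of the primitive transition matrix on the weight space of a train track carrying $\tilde\calF^u$, to which the Perron--Frobenius theorem applies and forces a one-dimensional $\lambda$-eigenspace upstairs. I would invoke this simplicity from the standard pseudo-Anosov literature rather than reprove it.
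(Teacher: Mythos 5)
Your argument is correct and is essentially the paper's proof: the same decomposition of $H^1$ of the orientation double cover into $\pm1$-eigenspaces of the deck involution, the same observation that $[\tilde\omega^u]$ and $[\tilde\omega^s]$ lie in different summands whose characteristic polynomials are integral, and the same appeal to the standard fact (McMullen, cited in the paper) that the eigenvalue of modulus $\lambda$ on $H^1$ is simple and extremal. The only cosmetic differences are that you argue by contradiction via eigenvalue multiplicity where the paper directly notes that $\lambda$ and $1/\lambda$ are roots of different integral factors, and that you should allow for the signs $-\lambda$, $-1/\lambda$ in case $\tilde\psi$ reverses a transverse orientation.
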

\newcommand{\tpsi}{\widetilde{\psi}}
\begin{proof}
  Denote the surface by $N$ and the pseudo-Anosov map by $\psi$. There
  is a degree 2 cover $S \to N$, where $S$ is an orientable surface,
  and $\psi$ lifts to an orientation-preserving pseudo-Anosov
  homeomorphism $\tpsi$ of $S$ whose invariant foliations $\calF^u$
  and $\calF^s$ are transversely orientable.

  It is well-known that $\lambda$ and $1/\lambda$ (or $-\lambda$ and
  $-1/\lambda$) are eigenvalues of
  $\tpsi^* : H^1(S,\RR) \to H^1(S,\RR)$, and every other eigenvalue
  $\eta$ satisfies $1/\lambda < |\eta| < \lambda$ \cite[Theorem
  5.3]{McMullen03a}. The characteristic polynomial $\chi(\tpsi^*)$ has
  integral coefficients, since $\tpsi^*$ acts on $H^1(S,\ZZ)$.

  Let $h:S\to S$ be the orientation-reversing deck transformation. We
  have $H^1(S,\RR) = W^+ \oplus W^-$, where $W^+$ and $W^-$ are the
  $\pm1$-eigenspaces of $h^*$. Since $h$ commutes with $\tpsi$, the
  subspaces $W^+$ and $W^-$ are invariant under $\tpsi^*$. In
  particular, $\chi(\tpsi^*) = \chi(\tpsi^*|W^+) \chi(\tpsi^*|W^-)$,
  and the polynomials on the right have integral coefficients.

  Note that $\calF^u$ and $\calF^s$ are represented by 1-forms
  $\omega^u,\omega^s \in H^1(S,\RR)$. We have
  $h(\calF^u) = \pm \calF^u$ and $h(\calF^s) = \pm \calF^s$, where one
  of the signs is positive, the other one is negative. Hence
  $\omega^u \in W^+$ and $\omega^s \in W^-$ or the other way around.
  In particular, $\pm \lambda$ and $\pm 1/\lambda$ are roots of
  different factors of $\chi(\tpsi^*)$.
\end{proof}

\Cref{theorem:SAF} is a corollary of
\Cref{lemma:vanishing_SAF_crit,lemma:Galois-conjugates} and
\Cref{prop:lambda-and-reciprocal-not-conjugates}.

Note that \Cref{prop:lambda-and-reciprocal-not-conjugates} is true
even if none of the invariant foliations of $\psi$ is transversely
orientable. One can see this by lifting $\psi$ to a branched double
cover orienting one of the foliations.

\section{A geometric point of view}
  
One may also desire a geometric proof of \Cref{theorem:SAF} that uses
the definition of the SAF invariant. We sketch such an argument below.

Let $\psi$ be a pseudo-Anosov homeomorphism of a nonorientable surface. Assume
that its unstable foliation $\calF^u$ is orientable. Take a one-sided
simple closed curve $c$ transverse to $\calF^u$. Normalize the measure
on $\calF^u$ so that $c$ has measure $1/2$. The flow of
$\calF^u$ induces an interval exchange map $f$ on the boundary of the
Moebius band neighborhood of $c$. Note that the intervals come in
pairs that are interchanged by $f$.

When lifted to the orientable double cover, $c$ lifts to a curve of
measure 1, and the induced interval exchange is defined by the formula
\begin{displaymath}
  \tilde{f}(x) = f(x) + 1/2\quad (\mbox{mod }1).
\end{displaymath}
It is straightforward to check that $SAF(\tilde{f}) = 0$ using the
fact that the terms in the sum come in pairs such as
$l_i \wedge (t_i \pm 1/2) + l_i \wedge (-t_i \pm 1/2)$ and this causes
cancellations.

\section{The certificate}

A monic degree $n$ polynomial $p(x) = x^n + \cdots + a_{n-1}x + a_n$ is
\emph{reciprocal} if $p(x) = x^n p(1/x)/a_n$. We will use the
following well-known result for the field with two elements.

\begin{lemma}\cite[Theorem 8.14]{McMullenNotes}\label{lemma:reciprocal}
  Let $f: V \to V$ be a linear transformation of the vector space $V$
  over a field $K$. If $f$ preserves a non-degenerate bilinear
  form, then the characteristic polynomial $\chi(f)$ is reciprocal.
\end{lemma}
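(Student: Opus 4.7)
The plan is to work matrix-theoretically. Pick a basis of $V$ and let $M$ be the matrix of $f$ and $G$ the matrix of the bilinear form $B$. Non-degeneracy of $B$ means $G$ is invertible, and the preservation condition $B(fv,fw) = B(v,w)$ translates to the matrix identity $M^T G M = G$. A quick first observation is that $f$ must be invertible: if $fv = 0$ then $B(v,w) = B(fv, fw) = 0$ for every $w$, forcing $v = 0$.

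From $M^T G M = G$ one rewrites $M^T = G M^{-1} G^{-1}$, so $M^T$ is similar to $M^{-1}$. Since a matrix and its transpose always share a characteristic polynomial, this yields $\chi(M) = \chi(M^{-1})$.

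The final step is the standard identity relating $\chi(M^{-1})$ to the reciprocal of $\chi(M)$. Writing $p(x) = \chi(M) = x^n + a_1 x^{n-1} + \cdots + a_n$ and factoring over the algebraic closure as $p(x) = \prod_i (x-\lambda_i)$ (with all $\lambda_i \neq 0$ by invertibility of $M$), one verifies
\begin{displaymath}
  \chi(M^{-1})(x) = \prod_i \bigl(x - \lambda_i^{-1}\bigr) = \frac{x^n p(1/x)}{a_n},
\end{displaymath}
using that $a_n = (-1)^n \prod_i \lambda_i$. Combining with $\chi(M) = \chi(M^{-1})$ gives $p(x) = x^n p(1/x)/a_n$, which is exactly the reciprocal condition.

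I do not expect any real obstacle. The only point that needs care is bookkeeping of the leading coefficient and sign in the identity $\chi(M^{-1})(x) = x^n p(1/x)/a_n$, so that the normalization matches the paper's definition of \emph{reciprocal}; everything else is immediate from the matrix identity $M^T G M = G$ and the invertibility of $G$.
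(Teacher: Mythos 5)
Your proof is correct. The paper does not actually prove this lemma---it cites it to McMullen's notes as a known result---and your argument (deriving $M^TGM=G$, deducing invertibility of $M$ and the similarity $M^T\sim M^{-1}$ via $M^T=GM^{-1}G^{-1}$, then using $\chi(M^{-1})(x)=x^np(1/x)/a_n$) is exactly the standard proof of that cited theorem, with the sign and leading-coefficient bookkeeping handled correctly and valid over any field, including the field with two elements as needed later in the paper.
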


\begin{proposition}\label{prop:reciprocal-mod-2}
  Let $\phi$ be a homeomorphism of the closed nonorientable surface
  $N$ of genus $g$. The characteristic polynomial $p(x)$ of
  $\phi^* : H^1(S,\ZZ) \to H^1(S,\ZZ)$ is reciprocal mod 2.
\end{proposition}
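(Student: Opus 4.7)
The plan is to exploit mod~2 Poincar\'e duality, which is available for every closed manifold (orientable or not) since every manifold is $\ZZ/2$-orientable, in order to apply \Cref{lemma:reciprocal}, and then transfer the resulting reciprocity from $\ZZ/2$-cohomology back to integer cohomology via the universal coefficient theorem.

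First, I would observe that the cup product pairing
\[
H^1(N, \ZZ/2) \times H^1(N, \ZZ/2) \longrightarrow H^2(N, \ZZ/2) \cong \ZZ/2
\]
is non-degenerate by mod~2 Poincar\'e duality, and is preserved by $\phi^*$ because $\phi$ is a homeomorphism. \Cref{lemma:reciprocal} then implies that the characteristic polynomial of $\phi^*$ acting on $H^1(N, \ZZ/2)$ is reciprocal in $(\ZZ/2)[x]$.

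Next, I would connect this with $p(x) \bmod 2$. The universal coefficient theorem provides a $\phi^*$-equivariant short exact sequence
\[
0 \to H^1(N, \ZZ) \otimes \ZZ/2 \to H^1(N, \ZZ/2) \to \mathrm{Tor}(H^2(N, \ZZ), \ZZ/2) \to 0.
\]
Since $N$ is a closed nonorientable surface, $H^2(N, \ZZ) \cong \ZZ/2$, so the quotient is a copy of $\ZZ/2$ on which $\phi^*$ must act trivially (the identity being the only automorphism of $\ZZ/2$). Multiplicativity of characteristic polynomials along this exact sequence then gives
\[
\chi\bigl(\phi^* \text{ on } H^1(N, \ZZ/2)\bigr) \equiv p(x)\,(x+1) \pmod{2}.
\]
The left-hand side is reciprocal by the previous step and $(x+1)$ is reciprocal, so the quotient $p(x) \bmod 2$ is reciprocal, using the elementary fact that the quotient of two monic reciprocal polynomials is reciprocal.

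The main obstacle is the dimension mismatch between $H^1(N, \ZZ) \otimes \ZZ/2$ and $H^1(N, \ZZ/2)$: the latter is one dimension larger, reflecting the $\ZZ/2$-torsion in $H^2(N, \ZZ)$ that comes from the nonorientability of $N$. Identifying the extra factor as precisely $(x+1)$ -- rather than some other invariant linear factor -- is the step requiring care, and it is also the reason the proposition concludes with reciprocity mod~2 rather than reciprocity over $\ZZ$.
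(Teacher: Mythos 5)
Your argument is correct and is essentially the paper's own proof: both use the non-degeneracy of the mod~2 cup product together with \Cref{lemma:reciprocal}, and both identify the characteristic polynomial of $\phi^*$ on $H^1(N,\ZZ/2)$ as $p(x)(x+1)$ by exhibiting $H^1(N,\ZZ)\otimes\ZZ/2$ as a $\phi^*$-invariant codimension-one subspace with one-dimensional quotient on which the (invertible) induced map is the identity. The only cosmetic difference is that you package the last step via the universal coefficient exact sequence, whereas the paper phrases it as the image of the mod~2 reduction $H^1(N,\ZZ)\to H^1(N,\ZZ/2)$; these are the same subspace.
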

\begin{proof}
  Recall that $H_1(N,\ZZ) = \ZZ^{g-1} \oplus \ZZ_2$, so
  $H^1(N,\ZZ) \cong \ZZ^{g-1}$ and $H^1(N,\ZZ_2) \cong \ZZ_2^{g}$. The
  cup product on $H^1(N,\ZZ_2)$ is a non-degenerate bilinear form
  \cite[Example 3.8]{Hatcher02}. 

  Consider the mod 2 reduction $r : H^1(N,\ZZ) \to H^1(N,\ZZ_2)$. The
  image $\im(r) \subset H^1(N,\ZZ_2)$ has codimension 1. It is
  $\phi^*$-invariant and $\phi^*|{\im(r)}$ can be described by the
  same matrix as $\phi^*|H^1(N,\ZZ)$. Since $V$ has codimension 1, the
  characteristic polynomial of $\phi^*|{H^1(N,\ZZ_2)}$ is $(x+1)p(x)$
  or $xp(x)$ mod 2, but the latter is not possible since $\phi^*$ is
  invertible. According to \Cref{lemma:reciprocal}, $p(x)(x+1)$ is
  reciprocal mod 2 and hence the same holds for $p(x)$.
\end{proof}

\begin{proof}[Proof of \Cref{theorem:criterion}]
  Suppose $\tilde\psi$ is a lift of the pseudo-Anosov map
  $\psi: N \to N$, where $N$ is the closed genus $g+1$ nonorientable
  surface. The orientable invariant foliation of $\psi$ is represented
  by an element of $H^1(N,\RR)$ and it is an eigenvector of
  $\psi^*|H^1(N,\RR)$ with eigenvalue $\pm \lambda$ if it is the
  unstable foliation and $\pm 1/\lambda$ if it is the stable
  foliation.

  Let $p(x)\in\ZZ[x]$ be the characteristic polynomial of
  $\psi^*|H^1(N,\RR)$. Note that $\lambda$ is a root of $p(x)$,
  $p(-x)$, $x^gp(1/x)$ or $x^gp(-1/x)$. The statement of the theorem
  now follows from \Cref{prop:reciprocal-mod-2}.
\end{proof}

\section{The Arnoux--Yoccoz and Arnoux--Rauzy examples as nonorientable
  lifts}
\label{sec:AY-and-AR}

In this section we define pseudo-Anosov homeomorphisms of
nonorientable surfaces whose lifts by the orientable double cover are
the well-known pseudo-Anosov homeomorphisms of orientable surfaces
defined by Arnoux and Yoccoz \cite{ArnouxYoccoz81}. This justifies the
statement made in the introduction that the Arnoux--Yoccoz examples are
nonorientable lifts. 

Let $M$ be a Moebius band with meridian $\gamma$. Define a measured
foliation on $M$ transverse to $\gamma$ such that the length of
$\gamma$ is 1. (See \Cref{fig:AY}, where $M$ arises from identifying
the vertical sides of the rectangle by a flip, and leaves of the
foliation are vertical lines.)
\begin{figure}[ht]
  \centering
  \begin{tikzpicture}[scale=12]
    \def\rad{0.005}
    \def\colorstrength{40}
    \def\wrappingindex{2}
    \def\height{0.1}
    \def\midpointarray{0.259395/1/1/0.2594, 0.7781851/1/1/0.2594, 0.1721517/-1/2/0.1346, 0.4412948/-1/2/0.1346, 0.6456808/-1/3/0.0698, 0.7853096/-1/3/0.0698, 0.891343/-1/4/0.0362, 0.963781/-1/4/0.0362}

    \def\endpointarray{{0/1/red, 0.5187901/1/red/yellow/0/, 0.0375801/-1/red/yellow/0/3.6218987807, 0.3067233/-1/red/green/0/, 0.5758664/-1/red/green/0/, 0.7154952/-1/red/blue/0/, 0.855124/-1/red/blue/0/, 0.927562/-1/red/cyan/0/, 1/-1/red/cyan/0/}}


    \foreach \list in \endpointarray{
      \def\lastx{}
      \def\lastsign{}

      \foreach \x/\sign/\notused/\col/\isflipped/\mp[count=\i from 0,remember=\x as \lastx, remember=\sign as \lastsign] in \list{
        \ifnum \i > 0

        \def\darkcolor{\col!\colorstrength!white}
        \ifnum \isflipped = 0 
        \def\bp{\colorstrength}
        \def\ep{0}
        \else
        \def\bp{0}
        \def\ep{\colorstrength}
        \fi

        \ifnum \wrappingindex = \i

        \ifnum \sign = -1
        \fill[color=\col!\colorstrength!white]  (\lastx, 0) rectangle (1,\lastsign*\height);
        \fill[color=\col!\colorstrength!white]  (0,0) rectangle (\x, \sign*\height);
        \else
        \fill[color=\col!\colorstrength!white] (\lastx, 0) rectangle (\x, \sign*\height);

        \fi

        \else

        \fill[color=\col!\colorstrength!white] (\lastx, 0) rectangle (\x, \sign*\height);

        \fi
        \fi
      }
    }

    \foreach \list in \endpointarray{
      \foreach \x/\sign in \list{
        \draw (\x,0) -- (\x,\sign*\height);
      }
    }

    \foreach \sign in {-1,1}{
      \draw[dashed] (0,\sign*\height) -- +(1,0);
    }

    \draw[very thick] (0,0) -- (1,0);

    \foreach \list in \endpointarray{
      \foreach \x/\sign/\singcol in \list{
        \filldraw[fill=\singcol, draw=black] (\x,\sign*\height) circle (\rad);
      }
    }

    \newcommand\drawSeparatrices[1]{
      \foreach \x/\yone/\ytwo in #1{
        \draw (\x,\yone*\height) -- (\x,\ytwo*\height);
      }
    }
  
  \definecolor{separatrixcolor}{rgb}{0.0,0.392156862745,0.0}
  \tikzstyle{separatrix opts}=[dashed,very thick]
  \def\intersections{0/-1.0/1.0, 0.4812099/0/1.0, 0.5187901/0/1.0, 0.0375801/0/-1.0, 0.3067233/0/-1.0, 0.575866387689/-1.0/1.0, 0.0570763/0/1.0, 0.715495169328/-1.0/1.0, 0.1967051/0/1.0, 0.855123950966/-1.0/1.0, 0.3363339/0/1.0, 0.927561975483/-1.0/1.0, 0.4087719/0/1.0}

  \begin{scope}[color=separatrixcolor,separatrix opts]
    \drawSeparatrices{\intersections}
  \end{scope}
  
  \definecolor{curvecolor}{rgb}{1.0,0.0,0.0}
  \tikzstyle{curve opts}=[ultra thick]
  \def\intersections{0.005/0/1, 0.5237901/0/1}
  
  \begin{scope}[color=curvecolor,curve opts]
    \drawSeparatrices{\intersections}
  \end{scope}
  
  \draw[color=curvecolor,curve opts] (0.005,0.004) -- (0.523790063676,0.004);

  \path (0,\height) -- node[above] {$\alpha$} (0.518,\height); 
  \path (0.518,\height) -- node[above] {$\alpha$} (1.03,\height); 
  \path (0.03,-\height) -- node[below] {$\alpha^2$} (0.3,-\height);
  \path (0.3,-\height) -- node[below] {$\alpha^2$} (0.57,-\height);
  \path (0.57,-\height) -- node[below] {$\alpha^3$} (0.71,-\height);
  \path (0.71,-\height) -- node[below] {$\alpha^3$} (0.85,-\height);
  \path (0.85,-\height) -- node[below] {$\alpha^4$} (0.92,-\height);
  \path (0.92,-\height) -- node[below] {$\alpha^4$} (1,-\height);
 
  \node[right] at (1,0) {$\gamma$};
  \node[above] at (0.27,0) {$\gamma'$};

  \end{tikzpicture}
  \caption{The nonorientable surface.}
  \label{fig:AY}
\end{figure}
Fix some $g \ge 3$, and divide the boundary of $M$ into intervals of
lengths $\alpha,\alpha,\alpha^2,\alpha^2,\ldots,\alpha^g,\alpha^g$,
where $\alpha$ is the unique root of the polynomial
$x^g+x^{g-1}+\ldots+x-1$ lying in the interval $[0,1]$. Identification
of pairs of intervals of the same length by translations gives rise to
a singular measured foliation $\calF$ of $N_g$, the closed
nonorientable surface of genus $g+1$ whose orientable double cover is
$S_g$, the closed orientable surface of genus $g$. The curve $\gamma$
is one-sided and is transverse to $\calF$.

Consider another transverse curve $\gamma'$ that is obtained by a small
perturbation of the U-shaped curve shown on \Cref{fig:AY}. By following
the leaves of $\calF$ emanating from the unique singularity until they
hit $\gamma'$, one can see that the intervals appearing along
$\gamma'$ have lengths
$\alpha^2,\alpha^2,\alpha^3,\alpha^3,\ldots,
\alpha^{g+1},\alpha^{g+1}$,
in this order. It follows that there is a homeomorphism of $\gamma$ to
$\gamma'$ that brings intervals to intervals by shrinking by a factor
of $\alpha$, and this map extends to the homeomorphism $\psi$ of the
surface such that $\psi (\calF) = \alpha \calF$. 

We claim that the lift of $\psi$ to the orientable double cover is the
Arnoux--Yoccoz example on $S_g$. First notice that $\calF$ is
transversely orientable, hence so is the lift $\tilde\calF$. The curve
$\gamma$ lifts to a two-sided curve $\tilde\gamma$ transverse to
$\tilde\calF$ whose measure is 2. The induced interval exchange
transformation is the transformation $T$ in \cite{ArnouxYoccoz81} (up
to scaling by 2). See \Cref{fig:lift}.

\begin{figure}
  \centering
  \begin{tikzpicture}[scale = 12]
\def\rad{0.005}
\def\colorstrength{40}
\def\wrappingindex{8}
\def\height{0.1}

\def\midpointarray{0.1296975/1/1/0.2594/$\alpha$, 0.3890925/1/2/0.2594/$\alpha$, 0.5860758/1/3/0.1346/$\alpha^2$, 0.7206474/1/4/0.1346/$\alpha^2$, 0.8228404/1/5/0.0698/$\alpha^3$, 0.8926548/1/6/0.0698/$\alpha^3$, 0.9456715/1/7/0.0362/$\alpha^4$, 0.9818905/1/8/0.0362/$\alpha^4$, 0.0860758/-1/4/0.1346/$\alpha^2$, 0.2206474/-1/3/0.1346/$\alpha^2$, 0.3228404/-1/6/0.0698/$\alpha^3$, 0.3926548/-1/5/0.0698/$\alpha^3$, 0.4456715/-1/8/0.0362/$\alpha^4$, 0.4818905/-1/7/0.0362/$\alpha^4$, 0.6296975/-1/2/0.2594/$\alpha$, 0.8890925/-1/1/0.2594/$\alpha$}

\def\endpointarray{{0/1/red, 0.259395/1/green/orange/0/, 0.5187901/1/red/green/0/, 0.6533616/1/green/blue/0/, 0.7879332/1/red/cyan/0/, 0.8577476/1/green/magenta/0/, 0.927562/1/red/yellow/0/, 0.963781/1/green/gray/0/, 1/1/red/brown/0/}, {0.0187901/-1/green/red/0/3.62190772036, 0.1533616/-1/red/cyan/0/, 0.2879332/-1/green/blue/0/, 0.3577476/-1/red/yellow/0/, 0.427562/-1/green/magenta/0/, 0.463781/-1/red/brown/0/, 0.5/-1/green/gray/0/, 0.759395/-1/red/green/0/, 0.0187901/-1/green/orange/0/3.62190772036}}


\foreach \list in \endpointarray{
\def\lastx{}
\def\lastsign{}

\foreach \x/\sign/\notused/\col/\isflipped/\mp[count=\i from 0,remember=\x as \lastx, remember=\sign as \lastsign] in \list{
\ifnum \i > 0

\def\darkcolor{\col!\colorstrength!white}
\ifnum \isflipped = 0 
\def\bp{\colorstrength}
\def\ep{0}
\else
\def\bp{0}
\def\ep{\colorstrength}
\fi

\ifnum \wrappingindex = \i

\ifnum \sign = -1
\fill[left color=\darkcolor, right color=\darkcolor]  (\lastx, 0) rectangle (1,\lastsign*\height);
\fill[left color=\darkcolor, right color=\darkcolor]  (0,0) rectangle (\x, \sign*\height);
\else
\fill[left color=\darkcolor, right color = \darkcolor] (\lastx, 0) rectangle (\x, \sign*\height);
\fi

\else

\fill[left color=\darkcolor, right color = \darkcolor] (\lastx, 0) rectangle (\x, \sign*\height);

\fi
\fi
}
}

\newcommand\definepos[1]{
\ifnum #1 = 1
\def\pos{above}
\else
\def\pos{below}
\fi
}

\foreach \list in \endpointarray{
\foreach \x/\sign in \list{
\draw (\x,0) -- (\x,\sign*\height);
}
}

\begin{scope}[red, ultra thick]
  \draw (0.005,\height+0.005) -- ++(0,-\height)  -- ++(0.259395,0) -- ++(0,\height);
  \draw (0.505,-\height-0.005) -- ++(0,\height) -- ++(0.259395,0) -- ++(0,-\height);  
\end{scope}

\foreach \sign in {-1,1}{
\draw[dashed] (0,\sign*\height) -- +(1,0);
}

\draw[very thick] (0,0) -- (1,0);

\foreach \list in \endpointarray{
\foreach \x/\sign/\singcol in \list{
\filldraw[fill=\singcol, draw=black] (\x,\sign*\height) circle (\rad);
}
}

\foreach \x/\sign/\label/\length/\text in \midpointarray{
\definepos{\sign}
\node at (\x,\sign*\height) [\pos] {\text};
}

\node[right] at (1,0) {$\tilde\gamma$};
\node[above] at (0.1296975, 0) {$\tilde\gamma'$};

\end{tikzpicture}

  \caption{The oriented double cover.}
  \label{fig:lift}
\end{figure}

The lift $\tilde \psi$ maps $\tilde \gamma$ to $\tilde \gamma'$, so
the interval exchanges $f$ and $f'$ induced by the two curves are
conjugates by scaling by $\alpha$. Note also that $f'$ is the same as
the restriction of $f$ to the initial subinterval of length $2\alpha$.
Arnoux and Yoccoz use exactly this restriction to construct their
homeomorphism. Hence their example coincides with $\tilde\psi$.

Before Arnoux and Yoccoz gives the above mentioned examples in their
paper, they produce a pseudo-Anosov homeomorphism of the thrice
punctured projective plane. The genus 3 example in the Arnoux--Yoccoz
family is actually a lift of this map. Arnoux and Rauzy has given many
more examples on the thrice punctured projective plane. The lifts of
these examples to $S_3$ are the Arnoux--Rauzy examples mentioned in
\cite[Section 4.1]{DoSchmidt16}, hence they are also nonorientable
lifts.

\section{Acknowledgements} 

The paper was written while the author was a member at the Institute
for Advanced Study. The author thanks the IAS for its hospitality. He
also thanks the referee for thoughtful remarks and Livio Liechti for
correcting some typos.

This work was supported by NSF grant DMS-1128155.

\bibliographystyle{alpha}
\bibliography{../mybibfile}

\begin{thebibliography}{McM03b}

\bibitem[AR91]{ArnouxRauzy91}
Pierre Arnoux and G{{\'e}}rard Rauzy.
\newblock Repr{\'e}sentation g{\'e}om{\'e}trique de suites de complexit{\'e}
  {$2n+1$}.
\newblock {\em Bull. Soc. Math. France}, 119(2):199--215, 1991.

\bibitem[AS09]{ArnouxSchmidt09}
Pierre Arnoux and Thomas~A. Schmidt.
\newblock Veech surfaces with nonperiodic directions in the trace field.
\newblock {\em J. Mod. Dyn.}, 3(4):611--629, 2009.

\bibitem[AY81]{ArnouxYoccoz81}
Pierre Arnoux and Jean-Christophe Yoccoz.
\newblock Construction de diff\'eomorphismes pseudo-{A}nosov.
\newblock {\em C. R. Acad. Sci. Paris S\'er. I Math.}, 292(1):75--78, 1981.

\bibitem[Cal04]{Calta04}
Kariane Calta.
\newblock Veech surfaces and complete periodicity in genus two.
\newblock {\em J. Amer. Math. Soc.}, 17(4):871--908, 2004.

\bibitem[CS13]{CaltaSchmidt13}
Kariane Calta and Thomas~A. Schmidt.
\newblock Infinitely many lattice surfaces with special pseudo-{A}nosov maps.
\newblock {\em J. Mod. Dyn.}, 7(2):239--254, 2013.

\bibitem[DS16]{DoSchmidt16}
Hieu~Trung Do and Thomas~A. Schmidt.
\newblock New infinite families of pseudo-{A}nosov maps with vanishing
  {S}ah-{A}rnoux-{F}athi invariant.
\newblock {\em preprint}, 2016.

\bibitem[FLP79]{FLP}
Albert Fathi, Fran{\c{c}}ois Laudenbach, and Valentin Po{\'e}naru.
\newblock {\em Travaux de {T}hurston sur les surfaces}, volume~66 of {\em
  Ast\'erisque}.
\newblock Soci\'et\'e Math\'ematique de France, Paris, 1979.
\newblock S{\'e}minaire Orsay, With an English summary.

\bibitem[Hat02]{Hatcher02}
Allen Hatcher.
\newblock {\em Algebraic topology}.
\newblock Cambridge University Press, Cambridge, 2002.

\bibitem[McM]{McMullenNotes}
Curtis~T. McMullen.
\newblock Honors {A}bstract {A}lgebra.
\newblock {C}ourse {N}otes, {H}arvard {U}niversity.

\bibitem[McM03a]{McMullen03a}
Curtis~T. McMullen.
\newblock Billiards and {T}eichm{\"u}ller curves on {H}ilbert modular surfaces.
\newblock {\em J. Amer. Math. Soc.}, 16(4):857--885 (electronic), 2003.

\bibitem[McM03b]{McMullen03}
Curtis~T. McMullen.
\newblock Teichm\"uller geodesics of infinite complexity.
\newblock {\em Acta Math.}, 191(2):191--223, 2003.

\bibitem[Pen88]{Penner88}
Robert~C. Penner.
\newblock A construction of pseudo-{A}nosov homeomorphisms.
\newblock {\em Trans. Amer. Math. Soc.}, 310(1):179--197, 1988.

\bibitem[Str16]{StrennerDegrees}
Bal\'azs Strenner.
\newblock Algebraic degrees of pseudo-{A}nosov stretch factors.
\newblock In preparation, 2016.

\bibitem[Thu88]{Thurston88}
William~P. Thurston.
\newblock On the geometry and dynamics of diffeomorphisms of surfaces.
\newblock {\em Bull. Amer. Math. Soc. (N.S.)}, 19(2):417--431, 1988.

\end{thebibliography}

\end{document}